\newtheorem{theo}{Theorem}
\newtheorem{lema}{Lemma}
\newtheorem{rema}{Remark}
\newtheorem{prop}{Proposition}
\def \d {\displaystyle}
\def \barr {\begin{array}{l}}
\def \ear {\end{array}}
\def \beq {\begin{equation}}
\def \eeq {\end{equation}}
\def \beqn {\begin{eqnarray}}
\def \eeqn {\end{eqnarray}}
\def \f {\end{document}}
\def \h {\hspace}
\def\dfrac{\displaystyle\frac}
\newcommand{\field}[1]{\mathbb{#1}}
\newcommand{\R}{\field{R}}
\newcommand{\N}{\field{N}}
\begin{document}
\title[Delayed wave equation without displacement term]{\bf Asymptotic behavior of a delayed wave equation without displacement term}
\author{Ka\"{\i}s Ammari}
\address{UR Analysis and Control of PDEs, UR13ES64, Department of Mathematics, Faculty of Sciences of Monastir, University of Monastir, 5019 Monastir, Tunisia}
\email{kais.ammari@fsm.rnu.tn}

\author{Boumedi\`ene Chentouf}
\address{Kuwait University, Faculty of Science, Department of Mathematics, Safat 13060, Kuwait}
\email{chenboum@hotmail.com}

\begin{abstract} 
This paper is dedicated to the investigation of the asymptotic behavior of a delayed wave equation without the presence of any position term. First, it is shown that the problem is well-posed in the sense of semigroups theory. Thereafter, LaSalle's invariance principle is invoked in order to establish the asymptotic convergence for the solutions of the system to a stationary position which depends on the initial data. More importantly, without any geometric condition such as BLR condition \cite{ba} in the control zone, the logarithmic convergence is proved by using an interpolation inequality combined with a resolvent method.
\end{abstract}   

\subjclass[2010]{34B05, 34D05, 70J25, 93D15}
\keywords{wave equation, time-delay, asymptotic behavior, logarithmic stability}

\maketitle
\tableofcontents

\thispagestyle{empty}

\section{Introduction}
Let $\Omega$ be an open bounded connected set of $\R ^n$ having a smooth boundary $\Gamma=\partial \Omega$ of class $C^2$. Given a partition $(\Gamma_0, \Gamma_1)$ of $\Gamma$, we will be concerned with the wave equation
\beq
y_{tt} (x,t) - \Delta y(x,t)=0, \; \hspace{3cm} \mbox{in} \;\; \Omega \times (0,\infty),
\label{1}
\eeq
as well as the following boundary and initial conditions
\begin{equation}
\left \lbrace
\begin{array}{ll}
\frac{\textstyle \partial y }{\textstyle \partial \nu } (x,t) = 0, & \mbox{on} \, \Gamma_0 \times (0,\infty),\\[2mm]
\frac{\textstyle \partial y }{\textstyle \partial \nu } (x,t) = -\alpha y_t (x,t)-\beta y_t (x,t-\tau), & \mbox{on} \, \Gamma_1 \times (0,\infty),\\[1mm]
y(x,0)=y_0(x), \; y_t(x,0)=z_0(x), & x \in \Omega,\\
y_t(x,t)=f(x,t),  & (x,t) \in \Gamma_1 \times (-\tau,0),
\label{2}
\end{array}
\right.
\end{equation}
in which $\alpha >0, \, \beta \in \R$, $\nu$ is the unit normal of $\Gamma$ pointing towards the exterior of $\Omega$. Additionally, it is supposed that $\Gamma_1$ is nonempty while $\Gamma_0$ may be empty.

It is well-known that the wave equation has been actively and continuously analyzed either qualitatively or numerically or both. This has led to hundreds of research papers and hence it is quasi-impossible to cite all of them. Notwithstanding, we name few such as \cite{ka3,ka4,ba,burq,c1,c2,c3,c4,h,l2,LR,li1,li2,ma,mo,qr,rt,r1,t} and  shall survey those bearing a resemblance to (\ref{1})-(\ref{2}). Indeed, the author in \cite{l1} has been mainly preoccupied with the asymptotic convergence of solutions of (\ref{1})-(\ref{2}) in the case when there is no delay term ($\beta=0$). Recently, a boundary delayed wave equation has been considered in \cite{np} but under the condition $y=0$ on $\Gamma_0$. Later, a non-standard energy norm has been provided in \cite{bc1}-\cite{bc4} in order to analyze the asymptotic behavior of solutions to the system (\ref{1})-(\ref{2}) where no delay arises.

Based on the above discussion, the present article places primary emphasis on the study of the asymptotic behavior of solutions of the system (\ref{1}). The main motivation of our work is to deal with a wave equation under the occurrence of a boundary delay term (in contrast to \cite{l1} and \cite{bc1}-\cite{bc4}) but at the same time when no position term appears in the system (contrary to \cite{np}). This will permit to extend the outcome in \cite{l1} and \cite{bc1}-\cite{bc4} in the sense that we do take into account the presence of a delay phenomenon. Additionally, we shall be able to provide a convergence result of solutions to (\ref{1})-(\ref{2}) despite the absence of any position term in contrast to \cite{np}.

The rest of this work is organized as follows. In Section 2, preliminaries are given and the problem is set up.  Section 3  is devoted to the proof of existence and uniqueness of solutions of our system. Section 4 deals with the asymptotic convergence of solutions to an equilibrium state. In Section 5, it is proved that the convergence is in fact polynomial. Finally, this note ends with a conclusion.

\section{Preliminaries and problem statement}
\setcounter{equation}{0}
This aim of this section is to set our problem in an appropriate functional space.  To proceed, we assume without loss of generality that $\beta > 0$. Thereafter, consider the standard change of state variable \cite{da}
$$u(x,\rho,t)=y_t(x,t-\tau \rho ), \, \quad \quad x \in \Gamma_1, \, \rho \in (0 ,1), \,   t>0,  $$ 
and the state space
$$ {\cal H}=H^1(\Omega) \times L^2(\Omega) \times L^2(\Gamma_1 \times (0,1)),$$
equipped with the inner product
\beq
\barr
\langle (y,z,u), (\tilde y,\tilde z, \tilde u) \rangle_{\scriptscriptstyle {\cal H}} = \d \int_{\Omega} \left( \nabla y \nabla \tilde y+z \tilde z \right)  dx + \xi \int_0^1 \int_{\Gamma_1} u \tilde u \, d\sigma d\rho  \; + \\
 \d \varpi \left[   \int_{\Omega} z \, dx  + (\alpha + \beta) \int_{\Gamma_1} y \, d\sigma -\beta \tau \int_0^1 \int_{\Gamma_1} u  \, d\sigma d\rho  \right] \left[  \int_{\Omega} \tilde z \, dx  + (\alpha + \beta) \int_{\Gamma_1}  \tilde y \, d\sigma -\beta \tau \int_0^1 \int_{\Gamma_1}  \tilde u \, d\sigma d\rho  \right], \label{5}
\ear
\eeq
where $\varpi >0$ is a positive constant to be determined. Additionally, $\alpha, \, \beta$ and $\xi$ satisfy 
\beq
\barr
0 < \beta < \alpha,\\
\tau \beta < \xi < \tau (2\alpha-\beta).
\label{xx}
\ear
\eeq
 
The result below addresses the issue of equivalence of the usual norm of our state space ${\cal H}=H^1(\Omega) \times L^2(\Omega) \times L^2(\Gamma_1 \times (0,1))$ and that induced by the inner product defined by (\ref{5}).
\begin{prop}
The state space ${\cal H}$ equipped with the inner product (\ref{5}) is a Hilbert space provided that $\varpi$ is 
small enough.
\label{p1}
\end{prop}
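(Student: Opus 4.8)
The plan is to lean on the fact that ${\cal H}=H^1(\Omega)\times L^2(\Omega)\times L^2(\Gamma_1\times(0,1))$ endowed with its usual product norm $\|\cdot\|_{*}$ is already a Hilbert space; it then suffices to verify that (\ref{5}) is genuinely an inner product and that the norm $\|\cdot\|_{\cal H}$ it induces is equivalent to $\|\cdot\|_{*}$, since completeness is preserved under equivalent norms. Writing $X=(y,z,u)$ and abbreviating the bracketed linear form as $L(X)=\int_\Omega z\,dx+(\alpha+\beta)\int_{\Gamma_1}y\,d\sigma-\beta\tau\int_0^1\int_{\Gamma_1}u\,d\sigma d\rho$, one has $\|X\|_{\cal H}^2=\|\nabla y\|_{L^2(\Omega)}^2+\|z\|_{L^2(\Omega)}^2+\xi\|u\|_{L^2(\Gamma_1\times(0,1))}^2+\varpi\,|L(X)|^2$. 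The essential feature to keep in mind is that the leading term carries only the \emph{Dirichlet seminorm} $\|\nabla y\|_{L^2(\Omega)}$, which degenerates on constants because $\Gamma_0$ is allowed to be empty; hence the rank-one term $\varpi|L(X)|^2$ must be the mechanism that restores control of $\|y\|_{L^2(\Omega)}$.

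Bilinearity and symmetry of (\ref{5}) are immediate. For positive-definiteness, suppose $\|X\|_{\cal H}=0$. Then $z=0$, $u=0$ and $\nabla y=0$, so $y\equiv c$ is constant; consequently $L(X)=(\alpha+\beta)\,c\,|\Gamma_1|$, and $\varpi|L(X)|^2=0$ with $\varpi>0$ forces $c=0$ since $\Gamma_1$ has positive surface measure. Thus $X=0$ and (\ref{5}) is an inner product. For the upper estimate, the first three terms are trivially dominated by $\max(1,\xi)\|X\|_{*}^2$, while Cauchy--Schwarz together with the trace inequality $\|y\|_{L^2(\Gamma_1)}\le C\,\|y\|_{H^1(\Omega)}$ give $|L(X)|^2\le C\,\|X\|_{*}^2$; hence $\|X\|_{\cal H}^2\le\big(\max(1,\xi)+\varpi C\big)\|X\|_{*}^2$.

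The lower estimate is the crux and is where the degeneracy noted above must be defeated. The key tool is the generalized Poincar\'e inequality: there exists $C_P>0$ such that $\|y\|_{H^1(\Omega)}^2\le C_P\big(\|\nabla y\|_{L^2(\Omega)}^2+|\int_{\Gamma_1}y\,d\sigma|^2\big)$ for every $y\in H^1(\Omega)$. This holds precisely because the functional $y\mapsto\int_{\Gamma_1}y\,d\sigma$ does not annihilate nonzero constants ($\Gamma_1$ having positive measure), and it is proved by the standard Rellich compactness-and-contradiction argument. Next, isolating the boundary term in $L(X)$ via $(\alpha+\beta)\int_{\Gamma_1}y\,d\sigma=L(X)-\int_\Omega z\,dx+\beta\tau\int_0^1\int_{\Gamma_1}u\,d\sigma d\rho$ and applying Cauchy--Schwarz yields $|\int_{\Gamma_1}y\,d\sigma|^2\le C\big(|L(X)|^2+\|z\|_{L^2(\Omega)}^2+\|u\|_{L^2(\Gamma_1\times(0,1))}^2\big)$. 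Inserting this into the Poincar\'e bound controls $\|y\|_{H^1(\Omega)}^2$, and therefore $\|X\|_{*}^2=\|y\|_{H^1(\Omega)}^2+\|z\|^2+\|u\|^2\le K\|X\|_{\cal H}^2$, where the absorption of the $\|z\|^2$ and $\|u\|^2$ contributions is legitimate because $\xi>0$ and $\varpi>0$, so that $\|z\|^2,\ \xi\|u\|^2,\ \varpi|L(X)|^2$ all enter $\|X\|_{\cal H}^2$ with positive weight; here the constraints (\ref{xx}) guarantee $\xi>0$.

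I expect the main obstacle to be exactly this recovery of the full $H^1(\Omega)$-norm of $y$ out of the degenerate gradient seminorm, which forces one through the generalized Poincar\'e inequality and crucially uses that $\int_{\Gamma_1}$ is nontrivial on constants. Once the two-sided bound is in hand, completeness of $({\cal H},\|\cdot\|_{\cal H})$ follows at once from completeness of $({\cal H},\|\cdot\|_{*})$, finishing the proof. Finally, I note that positive-definiteness and the equivalence in fact persist for every $\varpi>0$, the constants merely degrading as $\varpi\to0$; the smallness required in the statement does no harm here and is presumably dictated by the dissipativity estimates of the subsequent sections, entering the present argument only through the size of the equivalence constants.
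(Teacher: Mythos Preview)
Your argument is correct and shares with the paper the decisive ingredient, namely the generalized Poincar\'e inequality
\[
\|y\|_{H^1(\Omega)}^2 \le C_P\Bigl(\|\nabla y\|_{L^2(\Omega)}^2 + \Bigl|\int_{\Gamma_1} y\,d\sigma\Bigr|^2\Bigr),
\]
which compensates for the degeneracy of the Dirichlet seminorm on constants. The upper bound $\|\cdot\|_{\cal H}\lesssim\|\cdot\|_*$ is handled identically in both proofs (Cauchy--Schwarz/H\"older plus the trace theorem).

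For the lower bound, however, you proceed differently from the paper. The paper starts from $\|X\|_{\cal H}^2$ itself, expands the square in $\varpi|L(X)|^2$, and uses a Young-type splitting with an auxiliary parameter $\delta<\alpha+\beta$ to isolate the term $\varpi(\alpha+\beta)(\alpha+\beta-\delta)\bigl(\int_{\Gamma_1}y\,d\sigma\bigr)^2$; Poincar\'e is then applied at the cost of subtracting a multiple of $\int_\Omega|\nabla y|^2$, and the resulting coefficients stay positive only when $\varpi$ is chosen small enough. By contrast, you work from the other side: you bound $\|X\|_*^2$ above by applying Poincar\'e first and then recovering $\bigl|\int_{\Gamma_1}y\,d\sigma\bigr|^2$ from $|L(X)|^2+\|z\|^2+\|u\|^2$, each of which already sits in $\|X\|_{\cal H}^2$ with a strictly positive weight. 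Your route is slightly slicker and, as you observe, yields the norm equivalence for \emph{every} $\varpi>0$ (with constants that blow up as $\varpi\to 0$), so the smallness hypothesis is in fact not needed for Proposition~\ref{p1}; the paper's direct approach, while perfectly valid, incurs that restriction as an artefact of the Young-inequality manipulation rather than of the problem itself.
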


\begin{proof} 
The immediate task is to show the existence of two positive constants $L_1$ and $L_2$ such that
\beq L_1 \|(y, z,u) \|_{\scriptscriptstyle {H^1(\Omega) \times L^2(\Omega) \times L^2(\Gamma_1 \times (0,1))}} \leq \|(y, z,u) \|_{\scriptscriptstyle {\cal H}} \leq L_2 \|(y, z,u) \|_{\scriptscriptstyle {H^1(\Omega) \times L^2(\Omega) \times L^2(\Gamma_1 \times (0,1))}}.
\label{in}
\eeq
Invoking Young's and H\"older's inequalities, (\ref{5}) gives
\begin{eqnarray*}
\|(y, z,u) \|_{\scriptscriptstyle {\cal H}}^2 & \leq & \int_{\Omega}  |\nabla y |^2  dx + (1+3 \varpi {mes}(\Omega) ) \int_{\Omega} z^2 \, dx + (\xi +3 \beta^2 \tau^2 ) \int_0^1 \int_{\Gamma_1} u^2 \, d\sigma d\rho \\
&+& 3 \varpi (\alpha + \beta)^2 {mes}(\Gamma_1)    \int_{\Gamma_1}  y^2 \, d\sigma.
\end{eqnarray*}
Thereafter using the trace Theorem \cite{a}, the above inequality yields
\begin{eqnarray*}
\|(y, z,u) \|_{\scriptscriptstyle {\cal H}}^2 & \leq & ( 1+3 K \varpi (\alpha + \beta )^2 {mes}(\Gamma_1 ) ) \int_{\Omega}  |\nabla y |^2  dx +3 K \varpi (\alpha + \beta )^2 {mes}(\Gamma_1 )  \int_{\Omega}  y^2  dx \\
&+& ( 1+3 \varpi {mes}(\Omega) ) ) \int_{\Omega} z^2 \, dx + (\xi +3 \beta^2 \tau^2 ) \int_0^1 \int_{\Gamma_1} u^2 \, d\sigma d\rho,
\end{eqnarray*}
where $K$ is a positive constant depending on $\Omega$ \cite{a}. This leads to the first inequality in (\ref{in}). With regard to the second one, we use (\ref{5}) and apply Young's inequality to get
\begin{eqnarray}
\|(y, z,u) \|_{\scriptscriptstyle {\cal H}}^2 &=& \int_{\Omega} \left( |\nabla y |^2 + z^2 \right) dx + \varpi \left[   \int_{\Omega} z \, dx -\beta \tau   \int_0^1 \int_{\Gamma_1} u \, d\sigma d\rho \right]^2  + \varpi (\alpha + \beta )^2 \left[   \int_{\Gamma_1} y \, d\sigma \right]^2   \nonumber\\
&+& \xi \int_0^1 \int_{\Gamma_1} u^2 \, d\sigma d\rho + 2 \varpi (\alpha + \beta )\left[   \int_{\Gamma_1} y \, d\sigma \right]  \left[   \int_{\Omega} z \, dx -\beta \tau   \int_0^1 \int_{\Gamma_1} u \, d\sigma d\rho \right] \nonumber\\
&\geq& \int_{\Omega} \left( |\nabla y |^2 +z^2 \right) dx  + \xi \int_0^1 \int_{\Gamma_1} u^2 \, d\sigma d\rho +
\varpi (\alpha + \beta ) \left[ \alpha + \beta  -\delta \right] \left[   \int_{\Gamma_1} y \, d\sigma \right]^2  \nonumber\\
&+& \varpi \left[ 1-(\alpha + \beta)  \delta^{-1} \right] \left[   \int_{\Omega} z \, dx -\beta \tau   \int_0^1 \int_{\Gamma_1} u \, d\sigma d\rho \right]^2, \label{6}
\end{eqnarray}
for any positive constant $\delta$. This, together with the generalized Poincar\'e inequality
$$
\int_{\Omega}  y ^2  dx \leq C \left\{ \int_{\Omega}  |\nabla y|^2  dx + \left( \int_{\Gamma_1} y \, d\sigma \right)^2  \right \},$$
implies that
\begin{eqnarray}
\|(y, z,u) \|_{\scriptscriptstyle {\cal H}}^2 &\geq& \varpi (\alpha + \beta)(\alpha + \beta-\delta) C_2^{-1} \int_{\Omega}    y ^2 \, dx +\left[1-\varpi (\alpha + \beta)(\alpha + \beta-\delta) \right] \int_{\Omega} |\nabla y |^2 \, dx + \int_{\Omega} z^2 \, dx \nonumber\\
&+&  \varpi \left[ 1-(\alpha + \beta) \delta^{-1} \right] \left[   \int_{\Omega} z \, dx -\beta \tau   \int_0^1 \int_{\Gamma_1} u \, d\sigma d\rho \right]^2  + \xi \int_0^1 \int_{\Gamma_1} u^2 \, d\sigma d\rho , \label{9}
\end{eqnarray}
provided that $\delta$ satisfies $\delta < \alpha + \beta$. Note also that $C >0$ is the Poincar\'e constant which depends solely on $\Omega$. Applying again Young's and H\"older's inequalities, it follows from (\ref{9}) that for any $\delta < \alpha + \beta$
\begin{eqnarray}
\|(y, z,u) \|_{\scriptscriptstyle {\cal H}}^2 &\geq& \varpi (\alpha + \beta)(\alpha + \beta-\delta) C^{-1} \int_{\Omega}    y^2 \, dx +\left[1-\varpi (\alpha + \beta)(\alpha + \beta-\delta) \right] \int_{\Omega} |\nabla y |^2 \, dx  \nonumber\\
&+&  \left[ 1+ 2 \varpi (1-(\alpha + \beta ) \delta^{-1} ) {mes}(\Omega) \right]  \int_{\Omega} z^2 \, dx \nonumber\\
&+&  \left[ \xi +2 \varpi (1-(\alpha + \beta ) \delta^{-1} ) \beta^2 \tau^2 {mes}(\Gamma_1 ) \right]   \int_0^1 \int_{\Gamma_1} u^2 \, d\sigma d\rho. \label{9v}
\end{eqnarray}
Finally, we pick up $\varpi$ such that 
$$
\varpi < \min \left \{ \frac{1}{(\alpha + \beta)(\alpha + \beta-\delta)}, \frac{\delta}{2(\alpha + \beta-\delta) {mes}(\Omega)} , \frac{\delta \xi}{2(\alpha + \beta-\delta) {mes}(\Gamma_1)} \right\}.
$$
which gives rise to  the second inequality of (\ref{in}).

\end{proof}

Now we are ready to formulate the system (\ref{1})-(\ref{2}) in an abstract differential equation in the Hilbert state space $ {\cal H}$ equipped with the inner product (\ref{5}). First, let us recall that $u(x,\rho,t)=y_t(x,t-\tau \rho ), \,\, x \in \Gamma_1, \, \rho \in (0 ,1), \,   t>0  $ and let $z=y_t$, $\Phi =(y, z,u)$. Whereupon, the closed loop system can be written as follows
\beq
\left\{
\barr
\Phi_t (t)= {\cal A} \Phi (t),\\
\Phi (0)= \Phi_0 =(y_0,z_0,f),
\ear
\right.
\label{s}
\eeq
where ${\cal A}$ is an unbounded linear operator defined by
\beq
\barr
 {\cal D}({\cal A}) =\Bigl\{ (y, z,u ) \in H^1(\Omega) \times H^1(\Omega) \times L^2(\Gamma_1 \times (0,1)); \Delta y \in L^2(\Omega); \,
\frac{\textstyle \partial y }{\textstyle \partial \nu } = 0 \, \mbox{on} \, \Gamma_0; \\
\hspace{5.2cm} \frac{\textstyle \partial y }{\textstyle \partial \nu }+\alpha z + \beta u(\cdot,1)=0, \, \mbox{and} \, \; z=u(\cdot,0) \, \mbox{on} \, \Gamma_1
\label{10}
\Bigr \},
\ear
\eeq
and  
\beq
{\cal A} (y, z,u ) = ( z, \Delta y, -\tau^{-1} u_{\rho} ), \quad  \forall (y, z,u ) \in {\cal D}({\cal A}).
\label{11}
\eeq

\section{Well-posedness of the problem}
This section addresses the problem of existence and uniqueness of solutions of the system (\ref{s}) in ${\cal H}$. To do so, we shall evoke semigroups theory. We have

\begin{prop} Assume that the conditions (\ref{xx}) hold. Then, the linear operator ${\cal A}$ generates a $C_0$ semigroup of contractions $S(t)$ on ${\cal H}=\overline {{\cal D}({\cal A})}$. Additionally, for any initial data $\Phi_0 \in {\cal D}({\cal A})$, the system (\ref{s}) possesses a unique strong solution $\Phi (t) S(t) \Phi_0 \in {\cal D}({\cal A})$ for all $ t \geq 0$ such that
$ \Phi (\cdot)  \in C^1 (\R^+;{\cal H}) \cap C  ( \R^+;{\cal D} ({\cal A}))$. In turn, if $\Phi_0 \in {\cal H}$, then the system (\ref{s}) has a unique weak solution $\Phi (t)= S(t) \Phi_0 \in {\cal H}$ such that $ \Phi (\cdot)  \in C^0 (\R^+;{\cal H} )$.
\label{l1}
\end{prop}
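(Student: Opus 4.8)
The plan is to invoke the Lumer--Phillips theorem: I will show that $\mathcal{A}$ is a densely defined dissipative operator whose range $\mathcal{R}(I-\mathcal{A})$ coincides with $\mathcal{H}$, which guarantees that $\mathcal{A}$ generates a $C_0$ semigroup of contractions $S(t)$. The two solution statements (a strong solution for $\Phi_0\in\mathcal{D}(\mathcal{A})$ with $\Phi\in C^1(\R^+;\mathcal{H})\cap C(\R^+;\mathcal{D}(\mathcal{A}))$, and a weak solution for $\Phi_0\in\mathcal{H}$ with $\Phi\in C^0(\R^+;\mathcal{H})$) then follow from the standard Hille--Yosida/Lumer--Phillips generation and regularity theory. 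Density of $\mathcal{D}(\mathcal{A})$ in $\mathcal{H}$ is routine, since the domain contains sufficiently many smooth fields, so I would only sketch it.

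The central step is dissipativity, and this is where the precise form of the inner product (\ref{5}) and the constraints (\ref{xx}) must be used. For $\Phi=(y,z,u)\in\mathcal{D}(\mathcal{A})$ I would compute $\langle \mathcal{A}\Phi,\Phi\rangle_{\mathcal{H}}$ term by term. In the first contribution $\int_\Omega(\nabla z\,\nabla y+\Delta y\,z)\,dx$, Green's formula together with $\partial_\nu y=0$ on $\Gamma_0$ and $\partial_\nu y=-\alpha z-\beta u(\cdot,1)$ on $\Gamma_1$ leaves only the boundary integral $-\int_{\Gamma_1}(\alpha z+\beta u(\cdot,1))z\,d\sigma$. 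The transport part contributes $-\tfrac{\xi}{2\tau}\int_{\Gamma_1}\big(u(\cdot,1)^2-z^2\big)\,d\sigma$ after integrating $u_\rho u$ over $\rho$ and using $u(\cdot,0)=z$. The crucial observation is that the rank-one $\varpi$-correction \emph{drops out}: applying $\mathcal{A}$ and using $\int_\Omega\Delta y\,dx=\int_{\Gamma_1}\partial_\nu y\,d\sigma$ shows that the first bracket evaluated at $\mathcal{A}\Phi$ vanishes identically, which is exactly the algebraic cancellation the change of variables $u(x,\rho,t)=y_t(x,t-\tau\rho)$ was designed to produce. Collecting the surviving terms yields the boundary quadratic form
\[
\langle \mathcal{A}\Phi,\Phi\rangle_{\mathcal{H}}
=\int_{\Gamma_1}\Big[\Big(\tfrac{\xi}{2\tau}-\alpha\Big)z^2-\beta\,z\,u(\cdot,1)-\tfrac{\xi}{2\tau}\,u(\cdot,1)^2\Big]\,d\sigma ,
\]
and I would check that its symmetric matrix is negative definite precisely under (\ref{xx}): writing $\xi=\tau s$ with $\beta<s<2\alpha-\beta$, the diagonal signs are immediate and the determinant condition reduces to $s(2\alpha-s)>\beta^2$, which holds throughout the interval because the concave map $s\mapsto s(2\alpha-s)$ attains its minimum at the endpoints with value $\beta(2\alpha-\beta)>\beta^2\Leftrightarrow\alpha>\beta$. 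Hence $\langle\mathcal{A}\Phi,\Phi\rangle_{\mathcal{H}}\le0$.

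For maximality I would solve $(I-\mathcal{A})\Phi=F$ for arbitrary $F=(f_1,f_2,f_3)\in\mathcal{H}$, that is $y-z=f_1$, $z-\Delta y=f_2$ and $u+\tau^{-1}u_\rho=f_3$. The last identity is a linear ODE in $\rho$ with the compatibility $u(\cdot,0)=z=y-f_1$, so $u$, and in particular $u(\cdot,1)$, is given explicitly by the variation-of-constants formula $u(x,\rho)=(y-f_1)e^{-\tau\rho}+\tau\int_0^\rho e^{-\tau(\rho-s)}f_3(x,s)\,ds$. Substituting $z=y-f_1$ and this expression for $u(\cdot,1)$ into the second equation and into the boundary condition on $\Gamma_1$ reduces the whole problem to the elliptic boundary value problem $-\Delta y+y=f_1+f_2$ in $\Omega$ with $\partial_\nu y=0$ on $\Gamma_0$ and $\partial_\nu y+(\alpha+\beta e^{-\tau})y=g$ on $\Gamma_1$, where $g$ gathers the known data. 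The bilinear form $a(y,\phi)=\int_\Omega(\nabla y\,\nabla\phi+y\phi)\,dx+(\alpha+\beta e^{-\tau})\int_{\Gamma_1}y\phi\,d\sigma$ is continuous and coercive on $H^1(\Omega)$ because $\alpha+\beta e^{-\tau}>0$, so Lax--Milgram provides a unique $y\in H^1(\Omega)$; then $z=y-f_1\in H^1(\Omega)$, $\Delta y=z-f_2\in L^2(\Omega)$, and $u\in L^2(\Gamma_1\times(0,1))$, whence $\Phi=(y,z,u)\in\mathcal{D}(\mathcal{A})$ solves $(I-\mathcal{A})\Phi=F$.

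I expect the main obstacle to be the dissipativity computation, specifically verifying that the $\varpi$-coupling term annihilates under $\mathcal{A}$ and that the residual $2\times2$ boundary form is negative definite exactly in the parameter window (\ref{xx}); once this is secured, maximality via Lax--Milgram and the generation and regularity conclusions are standard.
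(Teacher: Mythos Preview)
Your proposal is correct and follows essentially the same route as the paper: dissipativity via Green's formula (with the $\varpi$-bracket vanishing because $\int_\Omega \Delta y\,dx+(\alpha+\beta)\int_{\Gamma_1}z\,d\sigma+\beta\int_{\Gamma_1}(u(\cdot,1)-z)\,d\sigma=0$), then maximality of $I-\mathcal{A}$ by solving the transport ODE for $u$ and applying Lax--Milgram to the resulting Robin problem. The only cosmetic difference is that the paper bounds the cross term $-\beta z\,u(\cdot,1)$ by Young's inequality to obtain directly $\langle\mathcal{A}\Phi,\Phi\rangle_{\mathcal H}\le\tfrac12(\beta-2\alpha+\xi\tau^{-1})\int_{\Gamma_1}z^2+\tfrac12(\beta-\xi\tau^{-1})\int_{\Gamma_1}u(\cdot,1)^2$, whereas you verify negative definiteness of the associated $2\times2$ matrix; both arguments use (\ref{xx}) in the same way and are equivalent.
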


\begin{proof}
The ultimate outcome will be the proof of the dissipativity and maximality of the operator ${\cal A}$. First of all, let $\Phi=(y, z,u ) \in {\cal D}({\cal A})$. Using (\ref{5}) and (\ref{11}), we obtain 
\beq
\barr
\langle {\cal A} \Phi, \Phi \rangle_{\scriptscriptstyle {\cal H}} = 
\d \int_{\Omega} \left( \nabla y \nabla z+\Delta y z \right)  dx - \xi \tau^{-1} \int_0^1 \int_{\Gamma_1} u_{\rho} u \, d\sigma d\rho +  \\
 \d \varpi \left[   \int_{\Omega} \Delta \, dx  + (\alpha + \beta) \int_{\Gamma_1} z \, d\sigma +\beta \tau \int_0^1 \int_{\Gamma_1} u_{\rho}  \, d\sigma d\rho  \right] \left[  \int_{\Omega} z \, dx  + (\alpha + \beta) \int_{\Gamma_1} y \, d\sigma -\beta \tau \int_0^1 \int_{\Gamma_1}  u \, d\sigma d\rho  \right]. \label{dis1}
\ear
\eeq
This implies, thanks to Green formula and (\ref{10}), that
\beq 
\left< {\cal A} \Phi, \Phi \right>_{\scriptscriptstyle {\cal H}}=-\alpha \int_{\Gamma_1} z^2 \, d\sigma -\beta \int_{\Gamma_1} z u(x,1) \, d\sigma -\xi \tau^{-1} \int_0^1 \int_{\Gamma_1} u_{\rho} u \, d\sigma d\rho. \label{ps} 
\eeq
Applying Young's inequality and using the fact that $2 \int_0^1 \int_{\Gamma_1} u_{\rho} u \, d\sigma d\rho = \int_{\Gamma_1} ( u^2 (\sigma,1)- u^2 (\sigma,0) ) \, d\sigma=\int_{\Gamma_1} ( u^2 (\sigma,1)- z^2 ) \, d\sigma$, we deduce from (\ref{ps}) that
\beq 
\left< {\cal A} \Phi, \Phi \right>_{\scriptscriptstyle {\cal H}}\leq \dfrac{1}{2} \left( \beta-2\alpha+ \xi \tau^{-1}\right) \int_{\Gamma_1} z^2 \, d\sigma +\dfrac{1}{2} \left( \beta-\xi \tau^{-1}\right) \int_{\Gamma_1} u^2(\sigma,1) \, d\sigma. \label{diss} 
\eeq
Exploring the assumptions (\ref{xx}), one can claim that the operator ${\cal A}$ is dissipative.  

Now, let us show that the operator $( I-{\cal A})$ is onto, which is equivalent to prove that given $(f,g,v) \in {\cal H}$, we seek $(y,z,u ) \in {\cal D}({\cal A})$ such that 
 $(I-{\cal A}) (y,z,u)=(f,g,v)$, that is, 
\beq \left \lbrace \begin {array} {ll} 
y-z=f,& \mbox{in} \, \Omega \\ 
z-\Delta y =g, & \mbox{in} \, \Omega \\ 
u_{\rho} =-\tau u + \tau v, & \mbox{on} \, \Gamma_1 \times (0,1),\\
\d \frac{\textstyle \partial y }{\textstyle \partial \nu } = 0, & \mbox{on} \, \Gamma_0,\\[2mm]
u(\cdot,0)=z, & \mbox{on} \, \Gamma_1,\\[1mm] 
\d \frac{\textstyle \partial y }{\textstyle \partial \nu } +\alpha z + \beta u(\cdot,1)=0, & \mbox{on} \, \Gamma_1. \end{array} \right. \label{max}
\eeq 
Whereupon, $z=y-f$. Moreover, solving the equation of $u$ in the above system and recalling that $u(\cdot,0)=z$ on $\Gamma_1$, we get
\begin{equation}
\displaystyle u(x,\rho) = e^{-\tau \rho} y(x) -e^{-\tau \rho} f(x) \displaystyle +\tau \int_0 ^{\rho} e^{\tau (\eta- \rho)} v(x,\eta) \, d \eta.
\label{uu1} 
\end{equation}   
Thus
$$\displaystyle u(x,1) = e^{-\tau } y(x) +v_f(x),
$$
where
$ v_f (x)= -e^{-\tau} f(x) \displaystyle +\tau \int_0 ^{1} e^{\tau (\eta-1)} v(x,\eta) \, d \eta.$ In the light of the above arguments, one has only to seek $y \in H^2(\Omega)$ satisfying 
\beq \left \lbrace \begin {array} {ll} 
y^2-\Delta y =f+g, & \mbox{in} \, \Omega \\ 
\d \frac{\textstyle \partial y }{\textstyle \partial \nu } = 0, & \mbox{on} \, \Gamma_0,\\[2mm]
\d \frac{\textstyle \partial y }{\textstyle \partial \nu } +\left( \alpha +\beta e^{-\tau} \right) y-\alpha f + \beta v_f=0, & \mbox{on} \, \Gamma_1. \end{array} \right. \label{max2}
\eeq 
Using Green formula, one can prove that the system (\ref{max2}) is equivalent to the following variational equation 
\beq \begin
 {array} {l} \d \h{3mm} \int_{\Omega} \biggl( y \phi +\nabla y \nabla \phi \biggr)\, dx+\int_{\Gamma_1} \left( \alpha +\beta e^{-\tau} \right) y \phi \, d\sigma= \int_{\Omega} \left(f+g\right) \phi \, dx+\int_{\Gamma_1} (\alpha f - \beta v_f) \phi \, d\sigma, \end{array} 
\label{n4n} 
\eeq 
for any $\phi \in H^1(\Omega)$. Invoking Lax-Milgram Theorem \cite{br}, one can prove that (\ref{n4n}) admits a unique solution $y \in H^1(\Omega)$. Then, thanks to standard arguments used for solving elliptic linear equations, one can  recover the boundary conditions in (\ref{max2}). Herewith, the operator $I-{\cal A}$ is onto. As a direct consequence of Lummer-Phillips theorem \cite{p}, the  operator ${\cal A}$ is densely defined closed in ${{\cal H}}$ and generates a $C_0$-semigroup of contractions  $S (t)$ on ${\cal H}$. Lastly, the rest of the claims in Proposition \ref{l1} follows from semigroups theory \cite{p} (see also \cite{br}).
\end{proof}

\section{Asymptotic behavior}
\setcounter{equation}{0}
In this section, we will establish an asymptotic behavior result for the unique solution of (\ref{s}) in ${\cal H}$. 
The first main result of this work is:
\begin{theo} Assume that the conditions (\ref{xx}) hold. Then, for any initial data $\Phi_0=(y_0,z_0,f) \in {\cal H}$, the solution $\Phi (t)=(y(\cdot,t), y_t (\cdot,t)),y_t (\cdot,t-\tau \rho))$ of (\ref{s}) tends in ${\cal {H}}$ to $(\chi ,0,0)$ as $t \longrightarrow +\infty $, where
$$\chi =\displaystyle  \left( (\alpha+\beta) {mes}(\Gamma_1) \right)^{-1} \left( \int_{\Omega} z_0 \, dx + (\alpha+\beta) \int_{\Gamma_1}  y_0 \, d\sigma -\beta \tau \int_0^1 \int_{\Gamma_1} f \, d\sigma d\rho  \right).$$
\label{t1}
\end{theo}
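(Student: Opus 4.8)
The strategy is to apply LaSalle's invariance principle on the trajectory, to characterize the resulting $\omega$--limit set via an eigenvalue argument based on elliptic unique continuation, and finally to pin down the limit by means of a conserved quantity. Since the prospective limit $(\chi,0,0)=P\Phi_0$ depends on $\Phi_0$ through a \emph{bounded linear} map $P$ (the functional defining $\chi$ is continuous on $\mathcal H$) and $S(t)$ is a contraction, it suffices to prove the assertion for $\Phi_0\in{\cal D}({\cal A})$ and then to extend it to all of $\mathcal H$ by the usual density argument. The natural Lyapunov function is $V(\Phi)=\tfrac12\|\Phi\|_{\cal H}^2$, whose derivative along the flow is $\langle{\cal A}\Phi,\Phi\rangle$, already estimated in (\ref{diss}); by dissipativity $V$ is nonincreasing, so $\lim_{t\to\infty}V(S(t)\Phi_0)$ exists.

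The first and main obstacle is to establish that the orbit $\{S(t)\Phi_0:\,t\ge0\}$ is precompact in $\mathcal H$ for $\Phi_0\in{\cal D}({\cal A})$. Since $S(t)$ is a contraction and commutes with ${\cal A}$, the orbit is bounded in ${\cal D}({\cal A})$, so the $(y,z)$--components are bounded in $H^2(\Omega)\times H^1(\Omega)$ and hence precompact in $H^1(\Omega)\times L^2(\Omega)$ by Rellich's theorem. The transport component is delicate because ${\cal A}$ does \emph{not} have compact resolvent; here I would exploit the representation $u(\sigma,\rho,t)=z(\sigma,t-\tau\rho)$ on $\Gamma_1$ together with the hidden trace regularity $z(\cdot,s)\big|_{\Gamma_1}\in H^{1/2}(\Gamma_1)$ (uniformly in $s$, since $z(\cdot,s)\in H^1(\Omega)$ is uniformly bounded). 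As $\partial_\rho u$ is bounded in $L^2(\Gamma_1\times(0,1))$, an Aubin--Lions type compactness argument (with $H^{1/2}(\Gamma_1)\hookrightarrow\hookrightarrow L^2(\Gamma_1)$) yields precompactness of the $u$--component in $L^2(\Gamma_1\times(0,1))$ for $t\ge\tau$, while the finite time window $[0,\tau]$ contributes a compact continuous arc. I expect this trace/compactness step to be the technical heart of the proof.

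By LaSalle's principle, $\omega(\Phi_0)$ is then nonempty, compact and invariant, and $V$ is constant on it, so $\langle{\cal A}\Psi,\Psi\rangle=0$ along every trajectory contained in $\omega(\Phi_0)$. Under (\ref{xx}) both coefficients in (\ref{diss}) are strictly negative, whence $z=0$ and $u(\cdot,1)=0$ on $\Gamma_1$ there. On the invariant set $V$ is constant, so $S(t)$ restricts to a group of isometries, and a precompact orbit of such a group is almost periodic and decomposes over the purely imaginary eigenvalues of ${\cal A}$. Solving ${\cal A}\Phi=i\lambda\Phi$ with $z\big|_{\Gamma_1}=0$ and $u(\cdot,1)\big|_{\Gamma_1}=0$ gives $z=i\lambda y$, $-\Delta y=\lambda^2 y$ in $\Omega$, $\partial y/\partial\nu=0$ on all of $\Gamma$, and, when $\lambda\neq0$, also $y\big|_{\Gamma_1}=0$; thus $y$ has vanishing Cauchy data on the open set $\Gamma_1$, and elliptic unique continuation forces $y\equiv0$, hence $\Phi=0$. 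It is precisely at this point that no geometric (BLR) condition is required. Consequently $0$ is the only imaginary eigenvalue, $\ker{\cal A}=\{(c,0,0):c\in\R\}$, and therefore $\omega(\Phi_0)\subseteq\ker{\cal A}$.

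It remains to single out the limit inside the line $\ker{\cal A}$, which is done with the conserved quantity
\[
E(\Phi)=\int_{\Omega} z\,dx+(\alpha+\beta)\int_{\Gamma_1} y\,d\sigma-\beta\tau\int_0^1\int_{\Gamma_1} u\,d\sigma\,d\rho .
\]
Using Green's formula and the boundary relations in (\ref{10}), a direct computation shows $\frac{d}{dt}E(S(t)\Phi_0)=0$, so $E$ is constant along the flow; moreover $E$ is continuous on $\mathcal H$ and, on $\ker{\cal A}$, $E(c,0,0)=(\alpha+\beta)\,\mathrm{mes}(\Gamma_1)\,c$ is injective in $c$. Hence every element of $\omega(\Phi_0)$ carries the same value $E(\Phi_0)$, forcing $\omega(\Phi_0)$ to be the single point $(\chi,0,0)$ with $\chi=((\alpha+\beta)\,\mathrm{mes}(\Gamma_1))^{-1}E(\Phi_0)$, which is exactly the stated expression. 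Thus $S(t)\Phi_0\to(\chi,0,0)$ in $\mathcal H$ for strong solutions, and since $\Phi_0\mapsto(\chi,0,0)$ is bounded linear and $S(t)$ is a contraction, the convergence extends to every $\Phi_0\in\mathcal H$, completing the proof.
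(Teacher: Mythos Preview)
Your proof is correct and follows the same three-step skeleton as the paper (LaSalle, unique continuation to identify the $\omega$-limit set, conserved functional to select $\chi$), but two technical points are handled differently and are worth comparing. First, for precompactness the paper simply takes $\Phi_0\in\mathcal D(\mathcal A^2)$ and asserts that boundedness in the graph norm yields precompactness; you correctly observe that $\mathcal A$ does \emph{not} have compact resolvent because of the transport component, and you supply the missing argument by writing $u(\cdot,\rho,t)=z(\cdot,t-\tau\rho)\big|_{\Gamma_1}$ for $t\ge\tau$, using the uniform $H^1(\Omega)$-bound on $z$ to get $u\in L^\infty(0,1;H^{1/2}(\Gamma_1))$ with $u_\rho\in L^2$, and concluding via Aubin--Lions. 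This is more careful than the paper. Second, to show that the $\omega$-limit set consists of constants, the paper works directly with a trajectory $\tilde\Phi(t)$ in $\omega(\Phi_0)$: from $\tilde y_t|_{\Gamma_1}=\partial_\nu\tilde y|_{\Gamma_1}=0$ it derives a wave equation for $\tilde z=\tilde y_t$ with vanishing Cauchy data on $\Gamma_1$ and invokes Holmgren's (hyperbolic) uniqueness theorem to get $\tilde z\equiv0$, whence $\tilde y$ is harmonic with zero Neumann and thus constant. You instead pass through the almost-periodic decomposition of the isometric group on $\omega(\Phi_0)$, reduce to the eigenvalue problem $\mathcal A\Phi=i\lambda\Phi$, and for $\lambda\neq0$ obtain a Helmholtz equation with vanishing Cauchy data on $\Gamma_1$, settled by \emph{elliptic} unique continuation. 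Both routes avoid any geometric control condition; the paper's is more direct (no almost-periodic machinery) while yours uses only the simpler elliptic UCP. The identification of $\chi$ via the conserved linear functional $E$ and the density extension to all of $\mathcal H$ are identical to the paper.
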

\begin{proof}
By a standard argument of density of ${\cal D}({\cal A}^2)$ in ${\cal H} $
and the contraction of the semigroup $S(t)$, it suffices to prove Theorem \ref{t1} for smooth initial data $ \Phi_0  \in {\cal D}({\cal A}^2)$. Thereby, let $\Phi(t)=S(t) \Phi_0$ be the solution of (\ref{s}). It follows from Lemma \ref{l1} that the trajectory of solution $\{ \Phi(t) \}_{\scriptscriptstyle t \geq0}$ is a bounded set for the graph norm and thus precompact. Applying LaSalle's principle, we deduce that $\omega \left( \Phi_0 \right) $ is non empty, compact, invariant under the semigroup $S(t)$ and in addition $S(t) \Phi_0  \longrightarrow \omega \left( \Phi_0 \right)  \; $ as $ t \to  +\infty \, $ \cite{h}. Thenceforth, it suffices to show that $ \omega \left( \Phi_0 \right)$ reduces to $(\chi,0,0)$.  To this end, let
 $\tilde \Phi_0=\left(\tilde y_0,\tilde z_0, \tilde u_0\right) \in \omega \left( \Phi_0 \right) \subset {\cal D}({\cal A})$ and consider the unique strong solution of (\ref{1})-(\ref{2}) $\tilde \Phi(t)=\left(\tilde y (t),\tilde y_t (t),\tilde y_t(\cdot,t-\tau \rho )  \right)=S(t) \tilde \Phi_0 \in {\cal
 D}({\cal A})$. In view of the fact that $ \|\tilde \Phi (t)\|_{\scriptscriptstyle {\cal H}}$ is constant \cite{h}, we have $<{\cal A} \tilde \Phi,\tilde \Phi>_{\scriptscriptstyle {\cal H}}=0.$ Putting the above deductions together with (\ref{diss}) yields $\tilde z=\tilde y_t =0$ and $\tilde u(x,1)=\tilde y_t (x,t-\tau)=0$ on $\Gamma_1$. Whence $\tilde y$ satisfies the following 
\beq \left \lbrace \begin{array}{ll} 
\tilde y_{tt} - \Delta \tilde y = 0, & \mbox{in} \, \Omega \\
\frac{\textstyle \partial \tilde y }{\textstyle \partial \nu }= 0, & \mbox{on} \, \Gamma_0,\\[1mm]
\tilde y_{t}=\frac{\textstyle \partial \tilde y }{\textstyle \partial \nu } = 0, & \mbox{on} \, \Gamma_1,\\[1mm] 
\tilde y(0)=\tilde y_0; \, \tilde y_t (0)=\tilde z_0, & \mbox{in} \, \Omega, \\ 
\tilde y \in H^2(\Omega), \end{array} \right. \label{e2n} 
\eeq 
which in turn implies that $\tilde z=\tilde y_t$ is solution of 
\beq \left \lbrace \begin{array}{ll} 
\tilde z_{tt} -\Delta \tilde z = 0, & \mbox{in} \, \Omega \\ 
\tilde z =\frac{\textstyle \partial \tilde z}{\textstyle \partial \nu } = 0, & \mbox{on} \, \Gamma_0,\\[2mm] 
\tilde z =\frac{\textstyle \partial \tilde z}{\textstyle \partial \nu } = 0, & \mbox{on} \, \Gamma_1. 
\end{array} \right.
 \label{e3n} \eeq 
Evoking Holmgren's uniqueness theorem for the system (\ref{e3n}), one can claim that $\tilde
 z = 0$ and thus the system (\ref{e2n}) becomes 
$$ \left \lbrace \begin{array}{ll} 
\Delta  \tilde y = 0, & \mbox{in} \, \Omega \\ 
\frac{\textstyle \partial \tilde y }{\textstyle \partial \nu }= 0, & \mbox{on} \, \Gamma_0,\\[2mm] 
\frac{\textstyle \partial \tilde y }{\textstyle \partial \nu } = 0, & \mbox{on} \, \Gamma_1. 
\end{array} \right. $$ 
Consequently, $\tilde y$ is constant. To summarize, we have proved that given $\tilde \Phi_0=(\tilde y_0,\tilde z_0,\tilde u_0 ) \in \omega \left( \Phi_0 \right) \subset {\cal D}({\cal A})$, the solution $\tilde \Phi(t)=(\tilde
 y (t),\tilde y_t (t), \tilde y_t (\cdot,t- \rho \tau) )=S(t) \tilde \Phi_0 \in {\cal D}({\cal A})$ satisfies $(\tilde y (t),\tilde y_t (t), \tilde y_t (\cdot,t- \rho \tau) )=({\chi},0,0)$, for any $t \geq 0$, where $\chi$ is a real constant. Henceforth, the $\omega$-limit set $\omega \left( \Phi_0 \right)$ consists of constants $(\chi,0)$. It remains now  to provide an explicit form of $\chi$. To proceed, assume that $({\chi},0,0) \in \omega \left( \Phi_0 \right)$. This implies that there exists $\left \{ t_n \right \} \to \infty$,
 as $n \to \infty$ such that 
\beq \Phi(t_n)=(y(t_n), y_t (t_n), y_t (\cdot,t_n-\rho \tau) )=S(t_n) \Phi_0 \longrightarrow ({\chi},0,0), \quad \mbox{as} \, n \to \infty,   
\label{boum} \eeq 
in $ {\cal H}$. On the other hand, we claim that any solution of the system (\ref{1})-(\ref{2}) obeys the following property 
$$  {\cal E}(t)= \int_{\Omega} y_t \, dx  + (\alpha + \beta) \int_{\Gamma_1} y \, d\sigma -\beta \tau \int_0^1 \int_{\Gamma_1} y_t (x,t- \rho \tau)  \, d\sigma d\rho $$
is time-invariant. To ascertain the correctness of this claim, let us differentiate the above expression with respect to $t$ and then use (\ref{1})-(\ref{2}) together with Green formula. A straightforward computation gives 
\begin{eqnarray*}
{\cal E}_t(t)&=& -\int_{\Gamma_1} (\alpha y_t +\beta y_t(x,t- \tau)  \, d\sigma + (\alpha + \beta) \int_{\Gamma_1} y_t \, d\sigma- \beta \tau \int_0^1 \int_{\Gamma_1} y_{tt} (x,t- \rho \tau)  \, d\sigma d\rho\\
&=& -\beta \int_{\Gamma_1}  y_t(x,t- \tau)  \, d\sigma + \beta \int_{\Gamma_1} y_t \, d\sigma - \beta \tau \int_0^1 \int_{\Gamma_1} \tau^{-2} y_{\rho \rho} (x,t- \rho \tau)  \, d\sigma d\rho \\
&=& -\beta \int_{\Gamma_1}  y_t(x,t- \tau)  \, d\sigma + \beta \int_{\Gamma_1} y_t \, d\sigma - \beta \tau^{-1} \int_{\Gamma_1} ( y_{\rho} (x,t-\tau)-y_{\rho} (x,t) )  \, d\sigma \\
&=& -\beta \int_{\Gamma_1}  y_t(x,t- \tau)  \, d\sigma + \beta \int_{\Gamma_1} y_t \, d\sigma - \beta \tau^{-1} \int_{\Gamma_1} (-\tau)( y_{t} (x,t-\tau)-y_{t} (x,t) )  \, d\sigma =0,
\end{eqnarray*}
which confirms our claim. This yields 
\begin{equation} \int_{\Omega} y_t \, dx  + (\alpha + \beta) \int_{\Gamma_1} y \, d\sigma -\beta \tau \int_0^1 \int_{\Gamma_1} y_t (x,t-\rho \tau)  \, d\sigma d\rho = \int_{\Omega} z_0 \, dx  + (\alpha + \beta) \int_{\Gamma_1} y_0 \, d\sigma -\beta \tau \int_0^1 \int_{\Gamma_1} f  \, d\sigma d\rho. 
\label{bou} 
\end{equation} 
It suffices now to pick $t=t_n$ in (\ref{bou}) and let $n \to \infty$. This, together with (\ref{boum}), implies that   
$$ 0  + (\alpha + \beta) \int_{\Gamma_1} \chi \, d\sigma -0=\displaystyle \int_{\Omega} z_0 \, dx  + (\alpha + \beta) \int_{\Gamma_1} y_0 \, d\sigma -\beta \tau \int_0^1 \int_{\Gamma_1} f  \, d\sigma d\rho $$
and hence
$${ \chi}=\displaystyle  \left( (\alpha+\beta) {mes}(\Gamma_1) \right)^{-1} \left( \int_{\Omega} z_0 \, dx + (\alpha+\beta) \int_{\Gamma_1}  y_0 \, d\sigma -\beta \tau \int_0^1 \int_{\Gamma_1} f \, d\sigma d\rho  \right).$$

\end{proof}

\section{Logarithmic convergence}
As it has been proved in the previous section that the solutions of our closed-loop system asymptotically converge to an equilibrium state, it is legitimate to wonder how is that convergence. In this section, we will mainly be concerned with the answer of such a question. In fact, we shall show that the convergence is actually logarithmic.

Let $\dot{\mathcal{H}}$ the closed subspace of $\mathcal{H}$ and of codimension $1$ given by
$$
\dot{\mathcal{H}} = \left\{(y,z,u) \in \mathcal{H}; \, 
\int_\Omega z(x) \, dx - \beta \tau \, \int_0^1 \int_{\Gamma_1} u(\sigma,\rho) \, d \sigma \, d \rho + (\alpha + \beta) \, \int_{\Gamma_1} y \, d \sigma = 0  \right\}
$$
and denote by $\dot{\mathcal{A}}$ a new operator defined as follows
$$
\dot{\mathcal{A}} : \mathcal{D}(\dot{\mathcal{A}}) := \mathcal{D}(\mathcal{A}) \cap \dot{\mathcal{H}} \subset \dot{\mathcal{H}} \rightarrow \dot{\mathcal{H}},
$$

\begin{equation}
\label{1.62bis}
\dot{\mathcal{A}} (y,z,u) = \mathcal{A} (y,z,u), \, \forall \, (y,z,u) \in \mathcal{D}(\dot{\mathcal{A}}).
\end{equation}

Assume that the conditions (\ref{xx}) hold. Then, we have, thanks to results of previous sections, that the operator $\dot{\mathcal{A}}$ defined by (\ref{11}) generates on $\dot{{\mathcal H}}$ a $C_0$-semigroup of contractions $e^{t \dot{\mathcal{A}}}$. Moreover, $\sigma(\dot{\mathcal{A}})$, the spectrum of $\dot{\mathcal{A}}$, consists of isolated eigenvalues of finite algebraic multiplicity only.

More importantly, the semigroup operator $e^{t \dot{\mathcal{A}}}$ is logarithmic stable on $\dot{\mathcal{H}}$. Indeed, our second main result is:
\begin{theo} \label{lrkv}
Suppose that the assumptions (\ref{xx}) are satisfied. Then, there exists $C >0$ such that for all $t > 0$ we have 
$$
\left\|e^{t \dot{\mathcal{A}}}\right\|_{{\mathcal L} (\mathcal{D}(\dot{\mathcal{A}}),\dot{\mathcal{H}})} \leq \frac{C}{\log{(2 + t)}}.
$$ 
\end{theo}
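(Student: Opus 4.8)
The plan is to deduce the logarithmic decay from a high-frequency resolvent estimate, following the resolvent approach of Burq \cite{burq} (and its abstract refinement by Batty--Duyckaerts). Since $\dot{\mathcal{A}}$ is the restriction to $\dot{\mathcal{H}}$ of the dissipative operator (\ref{11}) and generates a bounded (contraction) semigroup, that abstract theory reduces the proof of Theorem \ref{lrkv} to establishing two facts: (i) the imaginary axis lies in the resolvent set, $i\R \subset \rho(\dot{\mathcal{A}})$, and (ii) there exist $C,c>0$ such that
\[
\|(i\mu I - \dot{\mathcal{A}})^{-1}\|_{\mathcal{L}(\dot{\mathcal{H}})} \le C\, e^{c|\mu|}, \qquad \mu \in \R .
\]
Everything then comes down to proving (i) and (ii).

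For (i), the spectrum of $\dot{\mathcal{A}}$ is discrete, so it suffices to exclude purely imaginary eigenvalues. The value $0$ is excluded by construction, since the constant equilibrium $(\chi,0,0)$ of Theorem \ref{t1} has been quotiented out in passing to $\dot{\mathcal{H}}$. If $\dot{\mathcal{A}}\Phi = i\mu\Phi$ with $\mu\neq 0$ and $\Phi=(y,z,u)\neq 0$, then $\mathrm{Re}\langle\dot{\mathcal{A}}\Phi,\Phi\rangle_{\scriptscriptstyle\mathcal{H}}=0$; inserting this into the dissipation identity (\ref{diss}) and using (\ref{xx}) forces $z=0$ and $u(\cdot,1)=0$ on $\Gamma_1$. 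Since $z=i\mu y$, we get $y=0$ on $\Gamma_1$, and the boundary relation then yields $\partial_\nu y = 0$ on $\Gamma_1$ as well. Exactly as in the proof of Theorem \ref{t1}, $y$ solves a Helmholtz equation with both $y=0$ and $\partial_\nu y=0$ on $\Gamma_1$, an over-determined problem whose only solution vanishes by Holmgren's uniqueness theorem; hence $i\mu\notin\sigma(\dot{\mathcal{A}})$.

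For (ii), given $F=(f,g,h)\in\dot{\mathcal{H}}$ I would write the resolvent equation $(i\mu I-\dot{\mathcal{A}})\Phi=F$ componentwise, eliminate $z=i\mu y-f$, and solve the transport equation for $u$, which gives $u(\cdot,1)=e^{-i\mu\tau}(i\mu y-f)+\tau\int_0^1 e^{-i\mu\tau(1-s)}h(\cdot,s)\,ds$. Then $y$ satisfies the Helmholtz problem $-\Delta y-\mu^2 y = g+i\mu f$ in $\Omega$, with $\partial_\nu y=0$ on $\Gamma_0$ and the frequency-dependent Robin condition $\partial_\nu y + i\mu(\alpha+\beta e^{-i\mu\tau})y=(\text{data})$ on $\Gamma_1$. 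The heart of the matter is a frequency-explicit interpolation (Carleman) inequality à la Lebeau--Robbiano \cite{LR}: there exist $C,c>0$ such that every such $y$ obeys
\[
\|y\|_{H^1(\Omega)}^2 \le C e^{c|\mu|}\Big(\|(\Delta+\mu^2)y\|_{L^2(\Omega)}^2 + \|y\|_{L^2(\Gamma_1)}^2 + \|\partial_\nu y\|_{L^2(\Gamma_1)}^2\Big).
\]
Its crucial feature is that the observation is made only on the nonempty (but otherwise arbitrary) portion $\Gamma_1$, so that no BLR geometric condition \cite{ba} is required; the price is the exponential constant $e^{c|\mu|}$, which is exactly what produces a logarithmic rather than an exponential rate.

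To close (ii) I would feed the boundary relations into the right-hand side above: the Robin condition expresses $\partial_\nu y$ on $\Gamma_1$ in terms of $\mu y$ and the data, while $|e^{-i\mu\tau}|=1$ ensures the delay contributes only bounded multiplicative factors. Multiplying the Helmholtz equation by $\bar y$ and integrating by parts yields an energy identity whose imaginary part controls $\|y\|_{L^2(\Gamma_1)}$ (and the attendant boundary terms) by $\|F\|_{\scriptscriptstyle\dot{\mathcal{H}}}\,\|\Phi\|_{\scriptscriptstyle\dot{\mathcal{H}}}$; combining with the interpolation inequality and absorbing lower-order terms gives $\|y\|_{H^1(\Omega)}\le C e^{c|\mu|}\|F\|_{\scriptscriptstyle\dot{\mathcal{H}}}$, and the representations of $z$ and $u$ propagate the same bound to the full state, establishing (ii). The main obstacle is precisely the interpolation inequality and its clean interaction with the delay-loaded boundary condition: one must track the exponential constant through a Carleman estimate valid up to the observation boundary $\Gamma_1$, and verify that the factor $e^{-i\mu\tau}$ does not disturb the sign structure inherited from (\ref{xx}) when the Robin boundary term is absorbed.
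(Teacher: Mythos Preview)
Your proposal is correct and follows essentially the same route as the paper: the abstract Batty--Duyckaerts/Burq criterion, exclusion of purely imaginary eigenvalues via the dissipation identity (\ref{diss}) plus unique continuation, and the Lebeau--Robbiano interpolation inequality \cite{LR} to obtain the exponential resolvent bound without any geometric condition on $\Gamma_1$. The only notable difference is one of presentation: the paper argues the resolvent estimate by contradiction (a sequence $\gamma_n\to\infty$, $\|Z_n\|=1$, $e^{c|\gamma_n|}(i\gamma_n I-\dot{\mathcal A})Z_n\to 0$, then showing $Z_n\to 0$), whereas you propose a direct quantitative estimate on $(i\mu I-\dot{\mathcal A})^{-1}$; correspondingly, the paper invokes the interpolation inequality in the form $\|y_n\|_{H^1}\le C e^{c|\gamma_n|/2}(\|f_n\|_{H^1}+\|g_n\|_{L^2}+\|y_n\|_{L^2(\Gamma_1)})$ and obtains the boundary trace control of $y_n$ on $\Gamma_1$ from the dissipation bound $\Re\langle(i\gamma_n I-\dot{\mathcal A})Z_n,Z_n\rangle$ rather than from the imaginary part of the Helmholtz energy identity, but the two computations are equivalent since $z=i\mu y-f$ on $\Gamma_1$ and $\alpha+\beta\cos(\mu\tau)\ge\alpha-\beta>0$ under (\ref{xx}).
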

 
\begin{proof} 

We will employ the following frequency domain theorem for logarithmic stability from \cite[Theorem A]{BD} (see also \cite{burq}) of a $C_0$ semigroup of contractions on a Hilbert space:

\begin{lema}
\label{lemraokv}
A $C_0$ semigroup $e^{t{\mathcal L}}$ of contractions on a Hilbert space ${\mathcal X}$ satisfies, for all $t >0$, 
$$||e^{t{\mathcal L}}||_{{\mathcal L}(\mathcal{D}(\mathcal{A}),{\mathcal X})} \leq \frac{C}{\log{(2+t)}}$$
for some constant $C >0$  if 
\begin{equation}
\rho ({\mathcal L})\supset \bigr\{i \gamma \bigm|\gamma \in \R \bigr\} \equiv i \R, \label{1.8wkv} \end{equation}
and
 \begin{equation}\limsup_{|\gamma| \to \infty }  \| e^{-c |\gamma|} \, (i\gamma I -{\mathcal L})^{-1}\|_{{\mathcal L}({\mathcal X})} <\infty, \; \hbox{for some} \;  c > 0,\label{1.9kv} 
\end{equation}
where $\rho({\mathcal L})$ denotes the resolvent set of the operator 
${\mathcal L}$.
\end{lema}

The proof of Theorem \ref{lrkv} is based on the following lemmas.

\medskip

We first look at the point spectrum.

\begin{lema}\label{condspkv}
If $\gamma$ is a real number, then $i\gamma$  is not an eigenvalue of $\dot{\mathcal{A}}$.
\end{lema}
\begin{proof} We
will show that the equation 
\begin{equation} 
\dot{\mathcal{A}} Z = i \gamma Z 
\label{eigenkv} 
\end{equation}
with $Z= (y,z,u)^T \in \mathcal{D}(\dot{\mathcal{A}})$ and $\gamma \in \mathbb{R}$ has only
the trivial solution.

\medskip

Clearly, the system \eqref{eigenkv} writes:
\begin{eqnarray}
&&z = i \gamma y \label{eigen1}\\
&& \Delta y  = i \gamma z, \label{eigen2}\\
&&- \frac{u_\rho}{\tau}  = i \gamma u, \label{eigen3}\\
&&\int_\Omega z(x) \, dx - \beta \tau \, \int_0^1 \int_{\Gamma_1} u(\sigma,\rho) \, d \sigma \, d \rho + (\alpha + \beta) \, \int_{\Gamma_1} y \, d \sigma = 0, \label{eigen4}\\
&& \frac{\textstyle \partial y }{\textstyle \partial \nu } = 0 \,\,\, \mbox{on} \, \Gamma_0, \label{eigen5}\\
&&\frac{\textstyle \partial y }{\textstyle \partial \nu }+\alpha z + \beta u(\cdot,1)=0, \, \mbox{and} \, \; z=u(\cdot,0) \,\,\, \mbox{on} \, \Gamma_1. 
\label{eigen6}
\end{eqnarray}
\noindent 
Let us firstly deal with the case where $\gamma = 0$. In such an event, It is clear, thanks to (\ref{eigen1})-(\ref{eigen6}), that the only solution of \eqref{eigenkv} is the trivial one.

\medskip

Let us suppose now that $\gamma \neq 0$. By taking the inner product of (\ref{eigenkv}) with $Z$,
using  the inequality (\ref{diss}) we get: 
\begin{equation}\label{1.7kv}
\Re \left(<\dot{\mathcal{A}} Z,Z>_{{\dot{\mathcal{H}}}} \right) \leq \frac{1}{2} \, \left( (\beta - 2\alpha + \xi \tau^{-1} ) \int_{\Gamma_1} |z(\sigma)|^2 \, \d\sigma + (\beta -\xi \tau^{-1}) \, \int_{\Gamma_1}  |u(\sigma,1)|^2 \, d\sigma \right) (\leq 0).
\end{equation}
Whereupon, we have $z=u(\cdot,1) = 0$ on $\Gamma_1$. Consequently, the only solution of \eqref{eigenkv} is the trivial one.
\end{proof}

\begin{lema}\label{lemresolventkv}
The resolvent operator of $\dot{\mathcal{A}}$ satisfies condition \eqref{1.9kv}.
\end{lema}

\begin{proof}
Suppose that condition \eqref{1.9kv} is false. 
By Banach-Steinhaus Theorem (see \cite{br}), there exist a sequence of real numbers $\gamma_n \rightarrow \infty$ and a sequence of vectors 
$Z_n= (y_n,z_n,u_n)^T \in \mathcal{D}(\dot{\mathcal{A}})$ with $\|Z_n\|_{\dot{\mathcal{H}}} = 1$ such that 
\begin{equation}
\| e^{c|\gamma_n|} \, (i \gamma_n I - \dot{\mathcal{A}})Z_n\|_{\dot{\mathcal{H}}} \rightarrow 0\;\;\;\; \mbox{as}\;\;\;n\rightarrow \infty, 
\label{1.12kv} \end{equation}
i.e., 
\begin{equation} e^{c|\gamma_n|} \left(i \gamma_n y_n - z_n \right)  \equiv e^{c|\gamma_n|} \, f_{n}\rightarrow 0 \;\;\; \mbox{in}\;\; H^1(\Omega), 
\label{1.13kv}\end{equation}
 \begin{equation}
  e^{c|\gamma_n|} \left( i \gamma_n
z_n - \Delta y_n \right)  \equiv  e^{c|\gamma_n|} \, g_n \rightarrow 0 \;\;\;
\mbox{in}\;\; L^2(\Omega),
\label{1.13bkv} \end{equation}
 \begin{equation}
 e^{c|\gamma_n|} \, \left( i \gamma_n u_{n} + \frac{(u_n)_\rho}{\tau} \right) \equiv  e^{c|\gamma_n|} \, v_{n} \rightarrow 0 \;\;\;
\mbox{in}\;\; L^2(\Gamma_1;(0,1)).
\label{1.14bkv} \end{equation}

The task ahead is to derive from \eqref{1.12kv} that $\|Z_n\|_{\dot{\mathcal{H}}}$ converges to zero. Obviously, such a result contradicts the fact that $ \forall \ n  \in \N \,\, \left\|Z_n\right\|_{\dot{\mathcal{H}}}=1.$ For sake of clarity, we shall prove our result by proceeding by steps.
\medskip

\begin{itemize}
\item
{\bf First step.}

\medskip

We first notice that we have
\begin{equation}
|| e^{c|\gamma_n|} \, (i \gamma_n I - \dot{\mathcal{A}})Z_n||_{\dot{\mathcal{H}}} \ge \left|\Re \left(\langle e^{c|\gamma_n|} \, (i\beta_n I - \dot{\mathcal{A}})Z_n, Z_n\rangle_{\dot{\mathcal{H}}} \right) \right| \ . 
\label{1.15kv}
\end{equation}
Thus by \eqref{1.7kv} and \eqref{1.12kv}, 
\begin{equation}
\label{cvz}
e^{\frac{c|\gamma_n|}{2}} \, z_{n} \rightarrow 0 \; \hbox{in} \; L^2(\Gamma_1), \, e^{\frac{c|\gamma_n|}{2}} \, u_n (\cdot,1) \rightarrow 0 \; \hbox{in} \; L^2(\Gamma_1),
\end{equation}
and hence
\begin{equation}
\label{cvzz}
u_n (\cdot,0) \rightarrow 0 \; \hbox{in} \; L^2(\Gamma_1).
\end{equation}
Moreover we have according to (\ref{1.13kv}) and (\ref{cvz}) that 
\begin{equation}
\label{cvyt}
i \gamma_n \, e^{\frac{c|\gamma_n|}{2}} \, y_n = e^{\frac{c|\gamma_n|}{2}} \, z_n + e^{\frac{c|\gamma_n|}{2}} \, f_n \rightarrow 0 \; \hbox{in} \; L^2(\Gamma_1),
\end{equation}
which implies that
\begin{equation}
\label{cvy}
e^{\frac{c|\gamma_n|}{2}} \, y_n \rightarrow 0 
\; \hbox{in} \; L^2(\Gamma_1).
\end{equation}

\medskip

Solving \eqref{1.13bkv} , it follows that 
$$
u_n(x,\rho) = u_n(x,0) \, e^{- i \tau \gamma_n x} + \tau \, \int_0^\rho e^{- i \tau \gamma_n (\rho - s)} \, v_n (s) \, ds.
$$
This, together with \eqref{1.13bkv} and \eqref{cvzz}  yield
\begin{equation} 
\label{znkv}
u_n \rightarrow 0 \; \; \mbox{in}\;\; L^2(\Gamma_1,(0,1))\ .
\end{equation}
 
\item
{\bf Second step.}

\medskip

We take the inner product of (\ref{1.13kv}) with $z_n = i \gamma_n y_n - f_n$ in $L^2(\Omega)$. We obtain that

$$
\int_\Omega |z_n|^2 \, dx - \int_\Omega |\nabla y_n |^2 \, dx  = 
$$
\begin{equation}
\label{eqzy}
- \int_{\Gamma_1} 
\frac{\partial y_n}{\partial \nu} \, \overline{y} \, d \sigma  + \frac{1}{i \gamma_n} \int_\Omega \nabla y_n \, \nabla \overline{f}_n \, dx   -
\frac{1}{i \gamma_n} \, \int_{\Gamma_1} \frac{\partial y_n}{\partial \nu} \, \overline{f}_n \, d\sigma + \frac{1}{i \gamma_n} \, \int_\Omega g_n \, \overline{z}_n \, dx  = \circ (1).
\end{equation}
Moreover according to the interpolation inequality given in \cite[Theorem 3]{LR}, there exists a constant $C >0$ such that  for sufficiently large $n$ we have:
$$
\left\|y_n\right\|_{H^1(\Omega)} \leq C \, e^{\frac{c|\gamma_n|}{2}} \, \left( \left\| f_n\right\|_{H^1 (\Omega)} + \left\|g_n \right\|_{L^2(\Omega)} + \left\|y_n\right\|_{L^2(\Gamma_1)} \right).
$$

Then \eqref{1.13kv}, \eqref{1.13bkv} and \eqref{cvy} gives
\begin{equation}
\label{cvyg}
y_n \rightarrow 0 \; \hbox{in} \; H^1(\Omega).
\end{equation}
Combining \eqref{eqzy} and \eqref{cvyg}, we get 
\begin{equation}
\label{zcvd}
z_n \rightarrow 0 \; \hbox{in} \; L^2(\Omega).
\end{equation}
Lastly, amalgamating \eqref{znkv}, \eqref{cvyg}  and  \eqref{zcvd}, we  clearly reach the contradiction with $ \forall \ n  \in \N \,,\, \left\|Z_n\right\|_{\dot{\mathcal{H}}}=1. $
\end{itemize}

\end{proof}

The two hypotheses of Lemma \ref{lemraokv} are proved. Thereby, the proof of Theorem \ref{lrkv} is achieved.

\end{proof}

\begin{rema}
It is worth mentioning that one may consider dynamical boundary conditions, that is, instead of (\ref{2}), one can treat 
\begin{equation} 
\left \lbrace \begin{array}{ll} 
m(x) y_{tt} (x,t) +\frac{\textstyle \partial y }{\textstyle \partial \nu }  (x,t)= 0, & (x,t) \in \Gamma_0 \times (0,\infty)\\[2mm] 
M(x) y_{tt} (x,t) +\frac{\textstyle \partial y }{\textstyle \partial \nu } = -\alpha y_t (x,t)-\beta y_t (x,t-\tau), & (x,t) \in \Gamma_1 \times (0,\infty)\\[1mm]
y(x,0)=y_0(x) \in H^1(\Omega), \; y_t(x,0)=z_0(x) \in L^2(\Omega), \\[1mm]
{y_t}|_{\scriptscriptstyle \Gamma_0} (x,0)=w_0^0 (x) \in L^2(\Gamma_0) ,\; {y_t}|_{\scriptscriptstyle \Gamma_1} (x,0)=w_1^0 (x) \in L^2(\Gamma_1),\\
y_t(x,t)=f(x,t),  & (x,t) \in \Gamma_1 \times (-\tau,0), \label{2n} \end{array} \right. 
\end{equation}  
in which 
\begin{equation} 
\left \lbrace \begin{array}{l} 
m \in L^{\infty} (\Gamma_0); \; m(x) \geq m_0 >0, \forall x \in \Gamma_0;\\[2mm] 
M \in L^{\infty} (\Gamma_1); \; M(x)
 \geq M_1 >0, \forall x \in \Gamma_1. \label{mmn} 
\end{array} \right. 
\end{equation}

Note that in this case, most of arguments used run on much the same lines as we did previously but of course with a number of changes born out of necessity. For instance, the new state space is 
$$ {\cal H}_d=H^1(\Omega) \times L^2(\Omega) \times L^2(\Gamma_1 \times (0,1)) \times L^2(\Gamma_0) \times L^2(\Gamma_1),$$ equipped with the inner product 
\beq 
\barr 
\biggl\langle (y,z,u,w_0,w_1), (\tilde y,\tilde z,\tilde u, \tilde w_0,\tilde w_1) \biggr\rangle_{\scriptscriptstyle {\cal H}_d} = \d \int_{\Omega} \left( y  \tilde y+z \tilde z \right) dx + \xi \int_0^1 \int_{\Gamma_1} u \tilde u \, d\sigma d\rho + \int_{\Gamma_0}  m w_0 \tilde w_0 \, d\sigma+ \\
\d \int_{\Gamma_1} M w_1 \tilde w_1 \, d\sigma + \varpi \left( \int_{\Omega} z \, dx +\int_{\Gamma_0} m w_0 \, d\sigma+ \int_{\Gamma_1} \left(Mw_1+ (\alpha + \beta) y \right) \, d\sigma -\beta \tau \int_0^1 \int_{\Gamma_1}  \tilde u \, d\sigma d\rho\right) \\
\hspace{3.5cm} \d \times \left( \int_{\Omega} \tilde z \, dx + \int_{\Gamma_0}
 m \tilde w_0 \, d\sigma + \int_{\Gamma_1} \left(M \tilde w_1+ (\alpha + \beta) \tilde y \right) \, d\sigma -\beta \tau \int_0^1 \int_{\Gamma_1}  \tilde u \, d\sigma d\rho \right).
 \label{5n}
\ear 
\eeq 
Thereafter, arguing as in the proof of Proposition \ref{p1} one can show that the above inner product generates an equivalent norm to the standard one provided that $\varpi >0$ is small enough. Additionally, the novel system operator has just two extra components $w_0$ and $w_1$ compared to ${\cal A}$ defined in (\ref{10})-(\ref{11}). Lastly, it is simple task to check that all the outcomes of this work such as well-posedness, asymptotic convergence and logarithmic decay remain valid with dynamical boundary conditions (\ref{2n}).
\label{rem1}
\end{rema}

\section{Conclusion}

This paper was concerned with the asymptotic behavior of a wave equation under the presence of a boundary delay term but at the same time without the presence of any position term. It has been shown that solutions of the system exist and are unique in an appropriate functional space. Then, applying LaSalle's invariance principle are proved to converge asymptotically to a well-defined equilibrium state. Additionally, combining  an interpolation inequality combined with a resolvent method, the convergence rate is shown to be of logarithmic type  without any extra geometric assumption in the control zone.

We would like to point out that one promising future investigation is to deal with the wave equation (\ref{1}) under the presence of a nonlinear boundary delay term such as
$$
\displaystyle \frac{\textstyle \partial y }{\textstyle \partial \nu } (x,t) = -f(y_t (x,t))-g( y_t (x,t-\tau)), \quad \mbox{on} \, \Gamma_1 \times (0,\infty),\\[1mm]
$$
where $f$ and $g$ are nonlinear functions. 

\medskip

Furthermore, it would be interesting to consider (\ref{1})-(\ref{2}) in the case of time-dependent delay $\tau (t)$. 
This will be the focus of our attention in future.

\end{document}